\newcommand{\impl}{\longmapsto} 
\newcommand{\ls}{\leqslant}
\newcommand{\be}{\begin{equation}}
\newcommand{\ee}{\end{equation}}
\newcommand{\ix}{i_{\text{\tiny{$X$}}}}
\newtheorem{theorem}{Theorem}[section]
\newtheorem{defi}[theorem]{Definition}
\newtheorem{prop}[theorem]{Proposition}
\newtheorem{coro}[theorem]{Corollary}
\newtheorem{ex}[theorem]{Example}
\begin{document} 
\title{VARIABLE-BASIS FUZZY INTERIOR OPERATORS}
\author{Joaqu\'in Luna-Torres $^{\lowercase{a}}$\,\ and \,\ Lilibeth De Horta Narvaez $^{\lowercase{b}}$}

\dedicatory{Escuela de Matem\'aticas, Universidad Sergio Arboleda,\\ Calle 74 No. 14-14, Bogot\'a, Colombia}
\email{$^b$ldehortana@yahoo.es}
\email{$^a$ jluna@ima.usergioarboleda.edu.co}

\subjclass[2010]{06B05, 18B35, 54A40, 54B30}
\keywords{Vaiable-basis interior operator, CQML, LOQML and SET categories, topological category, open  and co-dense fuzzy sets}
\begin{abstract}
For a topological space it is well-known that the associated closure and interior operators provide equivalent descriptions of set-theoretic  topology; but it is not generally true in other categories, consequently it makes sense to define and study the notion of interior operators $I$ in the context of fuzzy set theory, where we can find categories in a lattice-theoretical context. 
Fuzzy interior operators have been studied by U. H\"ohle, A. \v{S}ostak  and others, (1999), these works were used to describe $L$-topologies on a set $X$.
More recently, M. Diker, S. Dost and A. U$\check{g}$ur (2009 ) present  interior and closure operators on texture spaces in the sense of $\check{C}$ech, and  F. G. Shi(2009) studies interior operators via L-fuzzy neighborhood systems.\\
The aim of this paper is to propose a more general theory of variable-basis fuzzy  interior operators, employing both categorical tools and the lattice theoretical foundations investigated by S. E. Rodabaugh (1999), where the lattices are usually non-complemented.  furthermore, we construct some topological categories.

\end{abstract}
\maketitle 
\baselineskip=1.7\baselineskip
\section*{0. Introduction}
For a topological space it is well-known that the associated closure and interior operators provide equivalent descriptions of set-theoretic  topology; but it is not generally true in other categories, consequently it makes sense to define and study the notion of interior operators $I$ in the context of fuzzy set theory, where we can find categories in a lattice-theoretical context. 
Interior operators are very useful tools in several areas of classical mathematics, its applications such as {\it Geographic information systems} and in general  category theory. In fuzzy set  theory,  fuzzy interior operators have been studied by U. H\"ohle, A. \v{S}ostak  and others, (see e.g. \cite{HS}), these works were used to describe $L$-topologies on a set $X$.

More recently, M. Diker, S. Dost and A. U$\check{g}$ur present  interior and closure operators on texture spaces in the sense of $\check{C}$ech (see \cite{DDU}), and  F. G. Shi( \cite{FS}) studies interior operators via L-fuzzy neighborhood systems. On the other hand, W. Shi and K. Liu (\cite{SL}) present a development of computational fuzzy topology, which is based on fuzzy  interior  and closure operators in order to get topological relations between spatial objects and  Geographic information systems.

The aim of this paper is to propose a more general theory of variable-basis fuzzy  interior operators, employing both categorical tools and the lattice theoretical fundations investigated in  \cite{SER} and \cite{HS}, where tha lattices are usually non-complemented.

The paper is organized as follows: Following \cite{SER} and \cite{HS}  we introduce, in section $1$, the basic lattice theoretical fundations. In section $2$, we present the concept of variable-basis fuzzy interior operators and then we construct a topological category $(\text{VBIO-SET},U)$. 
In section 3, we study some additional  properties of interior operators: idempotent and productive interior operators as well as open fuzzy sets and open morphisms. Finally in section 4, we present some examples of various classes of interior operators. 

\section{From Lattice Theoretic Foundations}
Let $(L, \leq)$ be a complete, infinitely distributive lattice, i.e. $(L, \leq)$ is a partially ordered set such that for every subset $A\subset L$ the join $\bigvee A$ and the meet $\bigwedge A$ are defined, moreover $(\bigvee A) \wedge \alpha = \bigvee \{ a\wedge \alpha) \mid a \in A \}$ and\linebreak $(\bigwedge A) \vee \alpha = \bigwedge \{a\vee \alpha) \mid a \in A \}$
for every $\alpha \in L$.  In particular, $\bigvee L = \top$ and $\bigwedge L = \bot$ are respectively the universal upper and the universal lower bounds in $L$.
We assume that $\bot \ne \top$, i.e.\ $L$ has at least two elements.

\subsection{Complete quasi-monoidal lattices} 
The definition of complete quasi-monoidal lattices introduced by  S. E. Rodabaugh in \cite{SER} is the following:

A $cqm-$lattice (short for complete quasi-monoidal lattice) is a
triple\linebreak $(L,\leqslant,\otimes)$ provided with the
following properties
\begin{enumerate}
\item[(1)] $(L,\leqslant)$ is a complete lattice with upper bound $\top$ and lower bound $\bot$.
\item[(2)] $\otimes: L\times L  \rightarrow L$ is a binary operation satisfying the following axioms:
\begin{enumerate}
\item $\otimes$ is isotone in both arguments, i.e. $\alpha_1 \leqslant\alpha_2,\,\ \beta_1\leqslant \beta_2$ implies
$\alpha_1\otimes\beta_1\leqslant \alpha_2\otimes\beta_2$; \item  $\top$ is idempotent, i.e. $\top \otimes\top = \top$.
\end{enumerate}
\end{enumerate}
The category $CQML$ comprises the following data:
\begin{enumerate}
\item[(a)] {\bf Objects}: Complete quasi-monoidal lattices.
\item[(b)] {\bf Morphisms}: All $SET$ morphisms, between the above objects, which preserve $\otimes$ and $\top$ and arbitrary $\bigvee$.
\item[(c)] Composition and identities are taken from $SET$. 
\end{enumerate}
The category $LOQML$ is the dual of $CQML$, i.e. $LOQML=CQML^{op}$.
\subsection{$GL-$monoids}
A $GL-$monoid (see \cite{HS}) is a complete
lattice enriched with a further binary operation $\otimes$, i.e.\ a triple $(\L, \leq, \otimes)$ such that:
\begin{enumerate}
\item[(1)]
$\otimes$ is isotone, commutative and associative; 
\item[(2)]
$(\L,\leq,\otimes)$ is integral, i.e.\ $\top$ acts as the unity: $\alpha \otimes \top = \alpha$, $\forall \alpha \in \L$;
\item[(3)]
$\bot$ acts as the zero element in $(\L, \leq, \otimes)$, i.e.\ $\alpha\otimes \bot = \bot$, $\forall \alpha \in \L$;
\item[(4)]
$\otimes$ is distributive over arbitrary joins, i.e.\ $\alpha \otimes (\bigvee_{\lambda} \beta_{\lambda}) = \bigvee_{\lambda} (\alpha \otimes \beta_{\lambda})$,
$\forall \alpha \in \L, \forall \{ \beta_{\lambda} : \lambda \in I\} \subset \L$;
\item[(5)]
$(\L, \leq, \otimes)$ is divisible, i.e.\ $\alpha \leq \beta$ implies the existence of $\gamma \in \L$ such that $\alpha = \beta \otimes \gamma$.
\end{enumerate}
It is well known that every $GL-$monoid is residuated, i.e.\ there exists a further binary operation ``$\impl$'' (implication) on $\L$ satisfying the following condition:
$$\alpha \otimes \beta \leq \gamma \Longleftrightarrow \alpha \leq (\beta \impl \gamma) \qquad \forall \alpha, \beta, \gamma \in \L.$$
Explicitly implication is given by
\[
\alpha \impl \beta = \bigvee \{ \lambda \in \L \mid \alpha \otimes \lambda \leq \beta \}.
\]
If $X$ is a set and $L$ is a $GL$-monoid (or a complete quasi-monoidal lattice), then the fuzzy powerset $L^X$ in an obvious way can be pointwise endowed with a structure of a $GL$-monoid (or of a complete quasi-monoidal lattice). In particular the $L$-sets $1_X$ and $0_X$ defined by $1_X (x)= \top$ and $0_X (x) = \bot$ $\forall x \in X$ are respectively the universal  upper and lower bounds in $L^X$.

\subsection{Powerset operator foundations}

We give the powerset operators, developed and justified in detail by S.E. Rodabaugh in \cite{SER} and \cite{SER1}.\
Let $f\in SET(X,Y)$,\,\ $L,M\in |CQML|$, \,\ $\phi\in LOQML(L,M)$, and $\wp(X)$,\ $\wp(Y),\ L^X,\ M^Y$ be the classical powerset of $X$, the classical powerset of $Y$, the $L$-powerset of $X$, and the $M$-powerset of $Y$, respectively. Then the following powerset operators are defined:
\begin{enumerate}
\item
$
f^{\rightarrow}:\wp(X)\rightarrow \wp(Y)\,\ \text{by}\,\ f^{\rightarrow}(A)=\{f(x)\mid x\in A\}
$
\item
$
f^{\leftarrow}:\wp(Y)\rightarrow \wp(X)\,\ \text{by}\,\ f^{\leftarrow}(B)=\{x\in X\mid f(x)\in B\}
$
\item
$
f_L^{\rightarrow}:L^X\rightarrow L^Y\,\ \text{by}\,\ f_L^{\rightarrow}(a)(y)=\bigvee_{f(x)=y} a(x)
$
\item
$
f_L^{\leftarrow}:L^Y\rightarrow L^X\,\ \text{by}\,\ f_L^{\leftarrow}(b)=b\circ f
$
\item
$
^{*}\phi:L\rightarrow M \,\ \text{by}\,\ ^{*}\phi(\alpha)=\bigwedge \{\beta\in M\mid \alpha\leqslant \phi^{op}(\beta)\}
$
\item
$
\langle^{*}\phi\rangle:L^X\rightarrow M^X \,\ \text{by}\,\ \langle^{*}\phi\rangle(a)={^{*}\phi}\circ a
$
\item
$
\langle\phi^{op}\rangle: M^X\rightarrow L^X\,\ \text{by}\,\ {\langle\phi^{op}\rangle}(b)=\phi^{op}\circ b
$
\item
$
\left( f,\Phi \right)^{\rightarrow}:L^X\rightarrow M^Y \,\ \text{by}\,\ \left( f,\Phi \right)^{\rightarrow}(a) = \bigwedge \{b\mid f_L^{\rightarrow}(a)\leqslant \left( \Phi^{op} \right)(b)\},
$
\item
$
\left( f,\Phi \right)^{\leftarrow} : M^Y\rightarrow L^X \,\ \text{by}\,\ \left( f,\Phi \right)^{\leftarrow}(b) =  \Phi^{op} \circ b\circ f,
$
in other words, that diagram
\[
\begin{diagram}
\node{X}\arrow{e,t}{f}\arrow{s,l}{\left( f,\Phi \right)^{\leftarrow}(b)}\node{Y}\arrow{s,r}{b}\\
\node{L}\node{M}\arrow{w,b}{\Phi^{op}}
\end{diagram}
\]
is commutative.
\end{enumerate}
Note that these operators were defined taking into account the Adjoint functor theorem. Consequently, we have that $f^{\rightarrow}$, \, $f_L^{\rightarrow}$, and $\left( f,\Phi \right)^{\rightarrow}$ are left adjoints of $f^{\leftarrow}$,\, $f_L^{\leftarrow}$, and $\left( f,\Phi \right)^{\leftarrow}$, respectively.

\section{Basic properties of variable-basis interior operators}
In this section we consider a subcategory $\mathcal{D}$ of CQML in order to construct fuzzy variable-basis interior operators on the category $SET\times\mathcal{D}$ that has as objects all pairs $(X,L)$, where $X$ is a set and $L$ is an object of $\mathcal{D}$, as morphisms from $(X, L)$ to $(Y,M)$ all pairs of maps $(f, \phi)$ with $f\in SET(X,Y)$ and $ \phi \in CQML(L,M)$, identities given by $id_{\text{\tiny{$(X,L)$}}} =(id_{\text{\tiny{$X$}}} ,id_{\text{\tiny{$L$}}}) $, and composition defined by

\[
(f, \phi ) \circ(g, \psi)=(f\circ g, \phi\circ \psi).
\]

\begin{defi} \label{def-int}
 An interior operator of the category $SET\times\mathcal{D}$ is given by a family

$I=(i_{\text{\tiny{$XL$}}})_{\text{\tiny{$(X,L)$}} \in \left| SET\times\mathcal{D} \right|}$ of maps 
$i_{\text{\tiny{$XL$}}}:L^{\text{\tiny{$X$}}}\longrightarrow L^{\text{\tiny{$X$}}}$ that satisfies the requeriment:

\begin{enumerate}
\item[($I_1$)] (Contraction) $i_{\text{\tiny{$X$}}}(u)\ls u\quad \text{for all}\quad u \in L^X$;
\item[($I_2$)] (Monotonicity)  if $u\ls v$ in $L^X$, then $\ix(u)\ls \ix(v)$;
\item[($I_3$)] (Upper bound) $\ix(1_{\text{\tiny{$X$}}})=1_{\text{\tiny{$X$}}}$.
\end{enumerate}
\end{defi}
                         
 \begin{defi} \label{int}
 A fuzzy variable-basis $I$-space is a triple $(X,L, i_{\text{\tiny{$XL$}}})$, where $(X,L)$ is an
object of $SET\times\mathcal{D}$ and $i_{\text{\tiny{$XL$}}}$ is an interior map on $(X,L)$.
\end{defi}

\begin{defi}
 A morphism $(f, \phi):(X,L) \longrightarrow (Y,M)$ in $SET\times\mathcal{D}$ is said to be fuzzy $I$-continuos if

\begin{equation}\label{ic1}
(f, \phi)^{\leftarrow}\big(i_{\text{\tiny{$YM$}}}(v)\big) \ls i_{\text{\tiny{$XL$}}}\Big((f,\phi)^\leftarrow (v)\Big)\; \text{for all} \; v \in M^{\text{\tiny{$Y$}}} . 
\end{equation}
\end{defi}

\begin{prop}
Consider two fuzzy $I$-continuous morphisms \linebreak $(f, \phi):(X,L) \longrightarrow (Y,M)$ and $(g,\psi):(Y,M)\longrightarrow (Z,N)$, then the morphism $(g, \psi)\circ (f,\phi)$ is fuzzy $I$-continuous.
\end{prop}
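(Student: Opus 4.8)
The plan is to reduce the whole statement to two structural facts about the variable-basis preimage operator $(\cdot)^{\leftarrow}$: that it is contravariantly functorial with respect to the composition law of $SET\times\mathcal{D}$, and that it is monotone. Once these are established, $I$-continuity of the composite will drop out by chaining the two continuity hypotheses together.

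First I would prove the key identity
\[
\big((g,\psi)\circ(f,\phi)\big)^{\leftarrow} \;=\; (f,\phi)^{\leftarrow}\circ(g,\psi)^{\leftarrow}.
\]
Starting from the definition $(f,\phi)^{\leftarrow}(b)=\phi^{op}\circ b\circ f$ and the composition law $(g,\psi)\circ(f,\phi)=(g\circ f,\psi\circ\phi)$, I would expand, for $w\in N^{Z}$, the left-hand side as $(\psi\circ\phi)^{op}\circ w\circ(g\circ f)$, then invoke the contravariance of $(\cdot)^{op}$, namely $(\psi\circ\phi)^{op}=\phi^{op}\circ\psi^{op}$, together with associativity of composition, to rewrite it as $\phi^{op}\circ\psi^{op}\circ w\circ g\circ f$. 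Expanding the right-hand side as $(f,\phi)^{\leftarrow}\big(\psi^{op}\circ w\circ g\big)=\phi^{op}\circ(\psi^{op}\circ w\circ g)\circ f$ gives the same $L$-set, so the two operators agree. Alongside this I would record that each preimage operator is monotone: since $\phi^{op}$ is order-preserving (being the $LOQML$-partner of a $CQML$-morphism, it preserves arbitrary $\bigvee$ and hence $\leqslant$), composition with $\phi^{op}$ and precomposition with $f$ both respect the pointwise order on the fuzzy powersets, whence $b_{1}\ls b_{2}$ forces $(f,\phi)^{\leftarrow}(b_{1})\ls (f,\phi)^{\leftarrow}(b_{2})$.

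With these tools in hand I would fix $w\in N^{Z}$ and run the estimate. By the identity above the left-hand side of the continuity inequality for the composite equals $(f,\phi)^{\leftarrow}\big((g,\psi)^{\leftarrow}(i_{ZN}(w))\big)$. Applying $I$-continuity of $(g,\psi)$, namely $(g,\psi)^{\leftarrow}(i_{ZN}(w))\ls i_{YM}\big((g,\psi)^{\leftarrow}(w)\big)$, and then monotonicity of $(f,\phi)^{\leftarrow}$, I bound this by $(f,\phi)^{\leftarrow}\big(i_{YM}((g,\psi)^{\leftarrow}(w))\big)$. Now applying $I$-continuity of $(f,\phi)$ to the $M$-set $v=(g,\psi)^{\leftarrow}(w)\in M^{Y}$ bounds the latter by $i_{XL}\big((f,\phi)^{\leftarrow}((g,\psi)^{\leftarrow}(w))\big)$, which by the functoriality identity is precisely $i_{XL}\big(((g,\psi)\circ(f,\phi))^{\leftarrow}(w)\big)$. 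Chaining these inequalities, which holds for arbitrary $w$, yields condition \eqref{ic1} for $(g,\psi)\circ(f,\phi)$.

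I expect the main (indeed essentially the only) obstacle to be the first step: correctly pinning down the contravariance $(\psi\circ\phi)^{op}=\phi^{op}\circ\psi^{op}$ and confirming that the variable-basis preimage operator composes in the order forced by the composition law of $SET\times\mathcal{D}$. This is the point where the two-component nature of the morphisms must be handled carefully, since both the set-map part and the lattice-morphism part contribute and the $(\cdot)^{op}$ reverses the latter. Everything after that is a purely formal order-theoretic chase through monotonicity and the two continuity hypotheses, with no further subtlety.
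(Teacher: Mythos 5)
Your proposal is correct and follows essentially the same route as the paper: chain the $I$-continuity of $(g,\psi)$ with that of $(f,\phi)$ at $v=(g,\psi)^{\leftarrow}(w)$, using monotonicity of $(f,\phi)^{\leftarrow}$ and the identity $\big((g,\psi)\circ(f,\phi)\big)^{\leftarrow}=(f,\phi)^{\leftarrow}\circ(g,\psi)^{\leftarrow}$. The only difference is that you make explicit the functoriality and monotonicity of the preimage operator, which the paper's proof uses tacitly.
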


\begin{proof}
Since $(g, \psi):(Y,M) \longrightarrow (Z,N)$ is $I$-continuous we have

\[
(g,\psi)^\leftarrow(i_{\text{\tiny{$ZN$}}}(w))\ls i_{\text{\tiny{$YM$}}}\big((g,\psi)^\leftarrow (w)\big) \quad \text{for all} \quad w \in N^{\text{\tiny{$Z$}}}
\] 
it follows that
 \[
(f,\phi)^\leftarrow \Big( (g, \psi)^\leftarrow  \big(i_{\text{\tiny{$ZN$}}} (w)\big)\Big)\ls(f,\phi)^\leftarrow\Big(i_{\text{\tiny{$YM$}}}\big((g,\psi)^\leftarrow (w)\big)\Big)
 \]

now, by the fuzzy $I$-continuity of $(f,\phi)$,
\[
(f,\phi)^\leftarrow(i_{\text{\tiny{$YM$}}}(v))\ls i_{\text{\tiny{$XL$}}}\big((f,\phi)^\leftarrow (v)\big) \quad \text{for all} \quad v \in M^{\text{\tiny{$Y$}}},
\]
in particular for $v=(g,\psi)^\leftarrow (w)$,
\[
(f,\phi)^\leftarrow\Big(i_{\text{\tiny{$YM$}}}\big((g,\psi)^\leftarrow (w)\big)\Big)\ls i_{\text{\tiny{$XL$}}}\Big((f,\phi)^\leftarrow \big((g,\psi)^\leftarrow (w)\big)\Big), 
\]

therefore 

\[
\Big((g, \psi)\circ (f,\phi)\Big)^\leftarrow\big(i_{\text{\tiny{$ZN$}}}(w)\big)\ls i_{\text{\tiny{$XL$}}}\Big(\big((g, \psi)\circ (f,\phi)\big)^\leftarrow (w)\Big).
\]
\end{proof}
   As a consequence we obtain
    
 \begin{defi} The category VBIO-SET that has as objects all triples $(X,L,i_{\text{\tiny{$XL$}}})$ where $(X,L)$ is an object of $SET\times\mathcal{D}$ and $i_{\text{\tiny{$XL$}}}:L^{\text{\tiny{$X$}}}\longrightarrow L^{\text{\tiny{$X$}}}$ is a fuzzy interior map, as morphisms from $(X,L,i_{\text{\tiny{$XL$}}})$ to $(Y,M,i_{\text{\tiny{$YM$}}})$ all pairs of fuzzy $I$-continuous functions $(f,\phi):(X,L,i_{\text{\tiny{$XL$}}})\longrightarrow (Y,M,i_{\text{\tiny{$YM$}}})$, identities and composition as in $SET\times D$
 \end{defi} 
 
\subsection{ The lattice structure of all interior operators.}

 We consider the collection
\[I\big( SET, L\big)\, \text{ for all}\, L \in \mathcal{D}
\]
of all interiors operators on $ SET\times \{L\} $. It is ordered by
$$I \ls J \leftrightarrow i_{\text{\tiny{$X$}}} (u) \ls j_{\text{\tiny{$X$}}}(u),\, \text{for all set $X$, and for all}\, u\in L^X.$$
This way $I\big(SET,L\big)$ inherents a lattice structure from $L$:

\begin{prop}
Every family $\big(I_{\text{\tiny{$\lambda$}}}\big)_{\text{\tiny{$\lambda \in \Lambda$}}}$ in $I\big(SET,L\big)$ has a join $\bigvee \limits_{\lambda \in \Lambda} I_{\text{\tiny{$\lambda$}}}$ and a meet $\bigwedge\limits_{\lambda \in \Lambda} I_{\text{\tiny{$\lambda$}}}$ in $I\big(SET,L\big)$.
The discrete interior operator
\[ 
I_{\text{\tiny{$D$}}}= \big(i_{\text{\tiny{$D_X$}}}\big)_{X \in |SET|}\,\ \text{with}\,\ i_{\text{\tiny{$D_X$}}(u)}=u \,\ \text{for all}\,\ u\in L^X
\]
is the largest element in $I\big(SET,L\big)$, and the trivial interior operator
\[ I_{\text{\tiny{$T$}}}= \big(i_{\text{\tiny{$T_X$}}}\big)_{X \in |SET|}\qquad \text{with}\qquad \big(i_{\text{\tiny{$T_X$}}}\big)(u)=\begin{cases}&1_{\text{\tiny{$X$}}} \; \text{for all} \,\,\ u\neq 0 \\ & 0_{\text{\tiny{$X$}}}\; \text{if} \,\,\ u=0_{\text{\tiny{$X$}}}
\end{cases}
\]
is the laeast one.

\end{prop}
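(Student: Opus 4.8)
The strategy is to exhibit $I(SET,L)$ as a complete lattice by computing all joins and meets pointwise and checking that the results remain interior operators. Since $L$ is a complete lattice, every fuzzy powerset $L^X$ is complete as well, with joins and meets taken pointwise over $x\in X$; hence the suprema and infima appearing below are well defined.

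For a non-empty family $(I_\lambda)_{\lambda\in\Lambda}$, writing $I_\lambda=\big((i_\lambda)_X\big)_{X\in|SET|}$, I would put
\[
\Big(\bigvee_{\lambda\in\Lambda} I_\lambda\Big)_X(u):=\bigvee_{\lambda\in\Lambda}(i_\lambda)_X(u),
\qquad
\Big(\bigwedge_{\lambda\in\Lambda} I_\lambda\Big)_X(u):=\bigwedge_{\lambda\in\Lambda}(i_\lambda)_X(u),
\]
for all $X$ and all $u\in L^X$, and then verify $(I_1)$--$(I_3)$ for each. Contraction $(I_1)$ holds because every $(i_\lambda)_X(u)\ls u$, so $u$ bounds the family from above (giving $\bigvee_\lambda(i_\lambda)_X(u)\ls u$) while the meet lies below any single term (hence below $u$); non-emptiness is used for the meet here. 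Monotonicity $(I_2)$ follows by combining the isotonicity of each $(i_\lambda)_X$ with that of $\bigvee$ and $\bigwedge$ in $L^X$. Axiom $(I_3)$ holds since each $(i_\lambda)_X(1_X)=1_X$, whence both $\bigvee_\lambda 1_X$ and $\bigwedge_\lambda 1_X$ equal $1_X$. As the order on $I(SET,L)$ is pointwise, these operators are exactly the least upper bound and greatest lower bound of the family.

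The extremal elements then dispose of the empty family and exhibit the bounds. The top is immediate: $(I_1)$ reads precisely $i_X(u)\ls u=i_{D_X}(u)$ for every interior operator $I$ and every $u$, so $I\ls I_D$, and $I_D$ plainly satisfies $(I_1)$--$(I_3)$. For the bottom I would confirm that $I_T$ is an interior operator and then compare directly: at $u=1_X$ axiom $(I_3)$ forces $i_X(1_X)=1_X=i_{T_X}(1_X)$, while for every other $u$ we have $i_{T_X}(u)=0_X\ls i_X(u)$; hence $I_T\ls I$. Together with the pointwise joins and meets of non-empty families, the presence of $I_D$ and $I_T$ makes $I(SET,L)$ complete, the empty join being $I_T$ and the empty meet $I_D$.

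The one point requiring care beyond routine lattice manipulation is the interaction of $I_T$ with contraction $(I_1)$: an interior operator can output $1_X$ only on an argument that already equals $1_X$, so the least such operator must send $1_X$ to $1_X$ and every other $u$ to $0_X$. Read this way $I_T$ genuinely satisfies $(I_1)$--$(I_3)$ and the comparison above goes through; the literal two-case description should be understood in this sense, since otherwise $(I_1)$ would fail whenever $L^X$ contains a value strictly between $0_X$ and $1_X$. I would also remark that the proof invokes only the completeness of $L$: neither the tensor $\otimes$ nor infinite distributivity plays any role.
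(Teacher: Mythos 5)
Your proof is correct and follows the same basic route as the paper's: joins and meets of a non-empty family are computed pointwise in $L^X$, the axioms $(I_1)$--$(I_3)$ are verified term by term, and the extremal operators handle the empty family. You are, however, more careful than the paper in three places that matter. First, the paper's own verification of the third axiom actually checks $\tilde{i}_X(0_X)=0_X$ (which is merely a consequence of contraction) and never confirms $\tilde{i}_X(1_X)=1_X$; you check the right condition. Second, the paper asserts but does not prove that $I_D$ and $I_T$ are respectively the largest and least elements; you supply both comparisons, correctly reading $(I_1)$ as the statement $I\ls I_D$ and using $(I_3)$ together with $0_X\ls i_X(u)$ for the lower bound. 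Third, and most importantly, you observe that the trivial operator as literally written in the statement --- $i_{T_X}(u)=1_X$ for every $u\ne 0_X$ --- violates contraction whenever $L^X$ contains an element strictly between $0_X$ and $1_X$, and that the genuine least interior operator must send $1_X$ to $1_X$ and every other $u$ to $0_X$. The paper never addresses this; your corrected reading is the only one under which the proposition is true, and your remark that only the completeness of $L$ (not $\otimes$ or infinite distributivity) is used is also accurate.
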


\begin{proof}
For $\Lambda \neq \emptyset $, let $\tilde{I}=\bigvee\limits_{\lambda \in \Lambda} I_{\text{\tiny{$\lambda$}}}$, then  
$$\tilde{i}_{\text{\tiny{$X$}}}=\bigvee \limits_{\lambda \in \Lambda} i_{\text{\tiny{$\lambda_{X}$}}},$$ where $X$ is an arbitrary set, satisfies 
\begin{itemize}
\item $ \tilde{i}_{\text{\tiny{$X$}}}(u)\ls u$, because $i_{\text{\tiny{$\lambda_{X}$}}}(u)\ls u$ for all $u \in L^X$ and for all $\lambda \in \Lambda.$ 
\item If $u_1 \ls u_2$ in $L^X$ then $ i_{\text{\tiny{$\lambda_{X}$}}}(u_1)\ls i_{\text{\tiny{$\lambda_{X}$}}}(u_2)$ for all $\lambda \in \Lambda$, therefore $\tilde{i}_{\text{\tiny{$X$}}}(u_1)\ls \tilde{i}_{\text{\tiny{$X$}}}(u_2).$
\item  Since $i_{\text{\tiny{$\lambda_{X}$}}}(0_X)=0_X$ for all $\lambda \in \Lambda$, we have that $\tilde{i}_{\text{\tiny{$X$}}}(0_X)=0_X.$

\end{itemize}

Similary $\bigvee\limits_{\lambda \in \Lambda} I_{\text{\tiny{$\lambda_{X}$}}}$, $I_{\text{\tiny{$T_X$}}}$ and $I_{\text{\tiny{$D_X$}}}$ are interior operators.

\end{proof}

Consequently,
\begin{coro}\label{cl}
For every set $X$
$$I(X)=\{i_{\text{\tiny{$X$}}} \mid  i_{\text{\tiny{$X$}}} \, \text{is an interior map on X}\}$$ is a complete lattice.
\end{coro}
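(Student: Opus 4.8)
The plan is to equip $I(X)$ with the order it inherits pointwise from the complete lattice $L^{\text{\tiny{$X$}}}$, declaring $\ix \ls j_{\text{\tiny{$X$}}}$ iff $\ix(u)\ls j_{\text{\tiny{$X$}}}(u)$ for every $u\in L^{\text{\tiny{$X$}}}$, and then to verify that this poset satisfies the standard sufficient condition for completeness: it possesses a greatest element and every \emph{nonempty} subfamily admits an infimum. Reflexivity, antisymmetry and transitivity of $\ls$ pass directly from $L^{\text{\tiny{$X$}}}$, so the first thing to record is that $\big(I(X),\ls\big)$ is genuinely a partially ordered set.

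Next I would exhibit the greatest element and the nonempty meets. By contraction $(I_1)$ every interior map satisfies $\ix(u)\ls u$, so the discrete operator $i_{\text{\tiny{$D_X$}}}$, given by $i_{\text{\tiny{$D_X$}}}(u)=u$, dominates all of $I(X)$ and is therefore its top; the previous proposition already identifies it as the largest element. For an arbitrary nonempty family $\big(i_{\text{\tiny{$\lambda_X$}}}\big)_{\lambda\in\Lambda}$ I would form the pointwise meet $\check{\imath}_{\text{\tiny{$X$}}}(u)=\bigwedge_{\lambda\in\Lambda} i_{\text{\tiny{$\lambda_X$}}}(u)$, which exists because $L^{\text{\tiny{$X$}}}$ is complete, and check the three axioms: contraction holds since $\check{\imath}_{\text{\tiny{$X$}}}(u)\ls i_{\text{\tiny{$\lambda_{0}X$}}}(u)\ls u$ for any fixed index $\lambda_0\in\Lambda$; monotonicity is inherited because $u_1\ls u_2$ forces $i_{\text{\tiny{$\lambda_X$}}}(u_1)\ls i_{\text{\tiny{$\lambda_X$}}}(u_2)$ for each $\lambda$ and meets respect $\ls$; and $(I_3)$ follows from $\check{\imath}_{\text{\tiny{$X$}}}(1_{\text{\tiny{$X$}}})=\bigwedge_{\lambda} 1_{\text{\tiny{$X$}}}=1_{\text{\tiny{$X$}}}$. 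Thus $\check{\imath}_{\text{\tiny{$X$}}}\in I(X)$, and being a pointwise meet it is at once a lower bound and the greatest lower bound of the family.

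Finally I would invoke the classical fact that a partially ordered set with a greatest element in which every nonempty subset has an infimum is automatically a complete lattice: the supremum of any $S\subseteq I(X)$ is recovered as the infimum of its set of upper bounds, which is nonempty because it contains $i_{\text{\tiny{$D_X$}}}$, while the empty meet is the top $i_{\text{\tiny{$D_X$}}}$ itself. This yields arbitrary joins and meets and completes the proof.

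The step I expect to demand the most attention is precisely the handling of the extreme (empty) cases, and this is where the naive ``everything is computed pointwise'' intuition fails: the pointwise meet over the empty family is the constant map $u\mapsto 1_{\text{\tiny{$X$}}}$, which violates contraction, while the pointwise join over the empty family is $u\mapsto 0_{\text{\tiny{$X$}}}$, which violates $(I_3)$ since $\bot\neq\top$. Consequently the top and bottom of $I(X)$ must be supplied separately as the discrete operator $i_{\text{\tiny{$D_X$}}}$ and the trivial operator $i_{\text{\tiny{$T_X$}}}$ of the preceding proposition, and completeness must be concluded through the upper-bound criterion rather than by a blanket pointwise formula; beyond this bookkeeping no real obstacle remains, since the complete (indeed infinite) distributivity of $L$ guarantees that all the required bounds exist.
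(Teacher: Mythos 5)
Your proof is correct and follows essentially the same route as the paper: the corollary is deduced from the preceding proposition by ordering $I(X)$ pointwise via $L^X$, checking that pointwise meets (and joins) of nonempty families satisfy $(I_1)$--$(I_3)$, and taking the discrete and trivial operators as the top and bottom. Your explicit treatment of the empty family --- observing that the pointwise formulas fail there and that completeness must instead be concluded from the existence of a greatest element together with all nonempty infima --- is more careful than the paper, which simply asserts that $I_D$ and $I_T$ are the extreme elements.
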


\subsection{Initial variable-basis interior operator}   
 Let $(Y,M,i_{\text{\tiny{$YM$}}})$ be an object of the category   VBIO-SET and let $(X,L)$ be an object of the category $SET\times\mathcal{D}$.\\
  For each morphism $(f,\phi):(X,L)\longrightarrow(Y,M)$ in $SET\times\mathcal{D}$ we define on $(X,L)$ the map
 $\hat{i}_{\text{\tiny{$XL$}}}:L^{\text{\tiny{$X$}}}\longrightarrow L^{\text{\tiny{$X$}}}$ by 
 \begin{equation}\label{ec*}
 \hat{i}_{\text{\tiny{$XL$}}}=(f,\phi)^\leftarrow \circ i_{\text{\tiny{$YM$}}}\circ (f,\phi)_{\ast},
 \end{equation}
 where $(f,\phi)_{\ast}$ is the right adjoint of $(f,\phi)^\leftarrow$. In other words, the following diagram is conmutative
 \[
\begin{diagram}
\node{L^{\text{\tiny{$X$}}}}\arrow{e,t}{(f,\phi)_{\ast}}\arrow{s,..}{\hat{i}_{\text{\tiny{$XL$}}}}\node{M^{\text{\tiny{$Y$}}}}\arrow{s,r}{i_{\text{\tiny{$YM$}}}}\\
\node{L^{\text{\tiny{$X$}}}}\node{M^{\text{\tiny{$Y$}}}}\arrow{w,b}{(f,\phi)^\leftarrow}
\end{diagram}
\]

\begin{prop}\label{imap}
$\hat{i}_{LX}$ an interior map.
\end{prop}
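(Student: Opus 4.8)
The plan is to verify directly that the map
$\hat{i}_{\text{\tiny{$XL$}}}=(f,\phi)^\leftarrow \circ i_{\text{\tiny{$YM$}}}\circ (f,\phi)_{\ast}$
of \eqref{ec*} satisfies the three defining conditions $(I_1)$--$(I_3)$ of Definition \ref{def-int}, exploiting that $i_{\text{\tiny{$YM$}}}$ is already an interior map and that $(f,\phi)^\leftarrow$ is left adjoint to $(f,\phi)_{\ast}$. To lighten the notation I write $F=(f,\phi)^\leftarrow : M^{\text{\tiny{$Y$}}}\to L^{\text{\tiny{$X$}}}$ and $G=(f,\phi)_{\ast}:L^{\text{\tiny{$X$}}}\to M^{\text{\tiny{$Y$}}}$, so that $\hat{i}_{\text{\tiny{$XL$}}}=F\circ i_{\text{\tiny{$YM$}}}\circ G$ and $F\dashv G$. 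From this adjunction I shall repeatedly use three facts: both $F$ and $G$ are isotone; the counit inequality $F(G(u))\ls u$ holds for every $u\in L^{\text{\tiny{$X$}}}$; and, since $G$ is a right adjoint, it preserves arbitrary meets, in particular the empty meet, giving $G(1_{\text{\tiny{$X$}}})=1_{\text{\tiny{$Y$}}}$.

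The monotonicity axiom $(I_2)$ and the contraction axiom $(I_1)$ are the routine steps. For $(I_2)$, if $u\ls v$ in $L^{\text{\tiny{$X$}}}$, then isotonicity of $G$, of $i_{\text{\tiny{$YM$}}}$ (its own axiom $(I_2)$), and of $F$ chain together to give $\hat{i}_{\text{\tiny{$XL$}}}(u)\ls \hat{i}_{\text{\tiny{$XL$}}}(v)$. For $(I_1)$, I first apply the contraction of $i_{\text{\tiny{$YM$}}}$ to get $i_{\text{\tiny{$YM$}}}(G(u))\ls G(u)$, then isotonicity of $F$ to obtain $F\big(i_{\text{\tiny{$YM$}}}(G(u))\big)\ls F(G(u))$, and finally the counit inequality $F(G(u))\ls u$; composing these three yields $\hat{i}_{\text{\tiny{$XL$}}}(u)\ls u$.

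The upper bound axiom $(I_3)$ is where the argument is most delicate, and it is the step I expect to be the main obstacle, because it forces both adjoints to interact correctly with the top element. The left adjoint $F$ automatically preserves joins, hence the bottom, but preservation of the \emph{top} is not formal and must be read off from the concrete structure. I evaluate $\hat{i}_{\text{\tiny{$XL$}}}(1_{\text{\tiny{$X$}}})=F\big(i_{\text{\tiny{$YM$}}}(G(1_{\text{\tiny{$X$}}}))\big)$ in three stages: first $G(1_{\text{\tiny{$X$}}})=1_{\text{\tiny{$Y$}}}$, since the right adjoint preserves the empty meet; next $i_{\text{\tiny{$YM$}}}(1_{\text{\tiny{$Y$}}})=1_{\text{\tiny{$Y$}}}$ by axiom $(I_3)$ for the interior map $i_{\text{\tiny{$YM$}}}$; and finally $F(1_{\text{\tiny{$Y$}}})=1_{\text{\tiny{$X$}}}$. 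This last equality is the crux: because $(f,\phi)^\leftarrow$ acts by postcomposition with the underlying lattice morphism, which preserves $\top$ by the defining axioms of a $CQML$-morphism, the constant $M$-set $1_{\text{\tiny{$Y$}}}=\top_{\text{\tiny{$M$}}}$ is carried to the constant $L$-set $\top_{\text{\tiny{$L$}}}=1_{\text{\tiny{$X$}}}$. Chaining the three stages gives $\hat{i}_{\text{\tiny{$XL$}}}(1_{\text{\tiny{$X$}}})=1_{\text{\tiny{$X$}}}$, which completes the verification that $\hat{i}_{\text{\tiny{$XL$}}}$ is an interior map.
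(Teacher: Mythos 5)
Your proof is correct and follows essentially the same route as the paper: verify $(I_1)$ via contraction of $i_{\text{\tiny{$YM$}}}$ plus the counit inequality $(f,\phi)^\leftarrow\circ(f,\phi)_{\ast}\ls \mathrm{id}$, $(I_2)$ by composing isotone maps, and $(I_3)$ by pushing $1_{\text{\tiny{$X$}}}$ through the three factors. The only difference is that you spell out why $(f,\phi)_{\ast}(1_{\text{\tiny{$X$}}})=1_{\text{\tiny{$Y$}}}$ (right adjoints preserve the empty meet) and why $(f,\phi)^\leftarrow(1_{\text{\tiny{$Y$}}})=1_{\text{\tiny{$X$}}}$ (the underlying lattice morphism preserves $\top$), steps the paper merely asserts.
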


\begin{proof}

\begin{itemize}

\item[(1)](Contraction) \begin{align*}
\hat{i}_{\text{\tiny{$LX$}}}(u)&= \Big((f,\phi)^\leftarrow \circ i_{\text{\tiny{$MY$}}} \circ (f,\phi)_{\ast}\Big)(u)\\ &\leq\Big((f,\phi)^\leftarrow  \circ (f,\phi)_{\ast}\Big)(u) \leq u;
\end{align*}
 
 \item[(2)] (Monotonicity) If $u_1 \leq u_2$ \quad then \quad $(f,\phi)_{\ast}(u_1)\leq (f,\phi)_{\ast}(u_2)$, \\
 therefore 
 $$i_{\text{\tiny{$MY$}}}\big((f,\phi)_{\ast}(u_1)\big)\leq i_{\text{\tiny{$MY$}}} \big((f,\phi)_{\ast}(u_2)\big),$$
 consequently
 $$(f,\phi)^\leftarrow \Big(i_{\text{\tiny{$MY$}}} \big((f,\phi)_{\ast}(u_1)\big)\Big)\leq (f,\phi)^\leftarrow \Big(i_{\text{\tiny{$MY$}}}\big((f,\phi)_{\ast}(u_2)\big)\Big)$$
 that is,\quad
 $\hat{i}_{LX}(u_1)\leq \hat{i}_{LX}(u_2)$;\\
 
 \item[(3)](Upper bound)
 \begin{align*}
 \hat{i}_{\text{\tiny{$LX$}}}(1_X)&=(f,\phi)^\leftarrow \circ i_{\text{\tiny{$MY$}}} \circ (f,\phi)_{\ast}(1_X)=(f,\phi)^\leftarrow \circ i_{\text{\tiny{$MY$}}}(1_Y)\\ &=(f,\phi)^\leftarrow(1_Y)= 1_X.
 \end{align*}
\end{itemize} 
\end{proof}
 \subsubsection{Structural source}
  \begin{prop}\label{ss}
Let $(X,L)$ be an object of $SET\times\mathcal{D}$, let $(Y_\lambda,M_\lambda, i_{\text{\tiny{$Y_\lambda M_\lambda$}}})$ be a family of fuzzy variable $I$-spaces, where $\lambda\in \Lambda$ for some indexed set $\Lambda$, and let $(f_\lambda, \phi_\lambda):(X,L)\rightarrow (Y_\lambda, M_\lambda)$ be a family of morphisms in $SET\times\mathcal{D}.$ Then the structured source $\left[(X,L),\Big((f_\lambda, \phi_\lambda),(Y_\lambda, M_\lambda) \Big)\right]_{\lambda\in\Lambda}$  w.r.t the forgetful functor $U$ from VBIO-SET to $SET\times\mathcal{D}$ has a unique initial lift $\big( (X,L,\tilde{i}_{\text{\tiny{$XL$}}})\rightarrow (Y_\lambda,M_\lambda, i_{\text{\tiny{$Y_\lambda M_\lambda$}}})\big)$, where $\tilde{i}_{\text{\tiny{$XL$}}}$ is the meet  $\bigwedge \limits_{\lambda \in \Lambda} \, ^{\text{\tiny{$\lambda$}}}\hat{i}_{\text{\tiny{$XL$}}}$ of all initial interior maps $^{\text{\tiny{$\lambda$}}}\hat{i}_{\text{\tiny{$XL$}}}$ w.r.t. $(f_\lambda, \phi_\lambda)$,  where $\lambda\in \Lambda$.
\end{prop}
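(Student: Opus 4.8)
The plan is to verify the two conditions that characterise an initial lift for the forgetful functor $U:\text{VBIO-SET}\to SET\times\mathcal{D}$, together with uniqueness. Writing $\tilde{i}_{\text{\tiny{$XL$}}}=\bigwedge_{\lambda\in\Lambda}{}^{\lambda}\hat{i}_{\text{\tiny{$XL$}}}$, these are: (A) each $(f_\lambda,\phi_\lambda):(X,L,\tilde{i}_{\text{\tiny{$XL$}}})\to(Y_\lambda,M_\lambda,i_{\text{\tiny{$Y_\lambda M_\lambda$}}})$ is fuzzy $I$-continuous; and (B) for every object $(Z,K,i_{\text{\tiny{$ZK$}}})$ of VBIO-SET and every $(g,\psi):(Z,K)\to(X,L)$ in $SET\times\mathcal{D}$ whose composites $(f_\lambda,\phi_\lambda)\circ(g,\psi)$ are all $I$-continuous, the map $(g,\psi):(Z,K,i_{\text{\tiny{$ZK$}}})\to(X,L,\tilde{i}_{\text{\tiny{$XL$}}})$ is $I$-continuous. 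Uniqueness will follow from antisymmetry of the $\ls$-order on $I(X)$: if two interior maps on $(X,L)$ both yield initial lifts, applying (B) to the identity on $(X,L)$ in each direction shows they are mutually $\ls$-comparable, hence equal.

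I would first dispose of the easy structural point, that $\tilde{i}_{\text{\tiny{$XL$}}}$ is an interior map. Proposition \ref{imap} gives that each ${}^{\lambda}\hat{i}_{\text{\tiny{$XL$}}}=(f_\lambda,\phi_\lambda)^\leftarrow\circ i_{\text{\tiny{$Y_\lambda M_\lambda$}}}\circ(f_\lambda,\phi_\lambda)_{\ast}$ satisfies $(I_1)$--$(I_3)$, and Corollary \ref{cl} guarantees that $I(X)$ is a complete lattice with meets computed coordinatewise; hence $\tilde{i}_{\text{\tiny{$XL$}}}$ inherits contraction from any single factor, monotonicity termwise, and $\tilde{i}_{\text{\tiny{$XL$}}}(1_{\text{\tiny{$X$}}})=\bigwedge_{\lambda} 1_{\text{\tiny{$X$}}}=1_{\text{\tiny{$X$}}}$ from the upper-bound axiom in each coordinate.

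For the universal property (B) I would exploit that $(g,\psi)^\leftarrow$, being a right adjoint of $(g,\psi)^\rightarrow$, preserves arbitrary meets. Given the hypotheses, for each fixed $\lambda$ and each $u\in L^{\text{\tiny{$X$}}}$ I would apply $I$-continuity of $(f_\lambda,\phi_\lambda)\circ(g,\psi)$ at the argument $(f_\lambda,\phi_\lambda)_{\ast}(u)$, rewrite $\big((f_\lambda,\phi_\lambda)\circ(g,\psi)\big)^\leftarrow=(g,\psi)^\leftarrow\circ(f_\lambda,\phi_\lambda)^\leftarrow$, and then use the counit $(f_\lambda,\phi_\lambda)^\leftarrow\big((f_\lambda,\phi_\lambda)_{\ast}(u)\big)\ls u$ together with monotonicity of $i_{\text{\tiny{$ZK$}}}$ to obtain
\[
(g,\psi)^\leftarrow\!\Big({}^{\lambda}\hat{i}_{\text{\tiny{$XL$}}}(u)\Big)\ls i_{\text{\tiny{$ZK$}}}\big((g,\psi)^\leftarrow(u)\big).
\]
Taking the meet over $\lambda$ and pulling $(g,\psi)^\leftarrow$ through the meet then yields $(g,\psi)^\leftarrow\big(\tilde{i}_{\text{\tiny{$XL$}}}(u)\big)\ls i_{\text{\tiny{$ZK$}}}\big((g,\psi)^\leftarrow(u)\big)$, which is precisely the $I$-continuity of $(g,\psi)$ into $(X,L,\tilde{i}_{\text{\tiny{$XL$}}})$.

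The step I expect to be the main obstacle is condition (A), the $I$-continuity of the individual structure morphisms with respect to the meet. For a single index the adjunction $(f_\lambda,\phi_\lambda)^\leftarrow\dashv(f_\lambda,\phi_\lambda)_{\ast}$ settles the matter: from the unit $v\ls(f_\lambda,\phi_\lambda)_{\ast}\big((f_\lambda,\phi_\lambda)^\leftarrow(v)\big)$ and isotonicity of $i_{\text{\tiny{$Y_\lambda M_\lambda$}}}$ and $(f_\lambda,\phi_\lambda)^\leftarrow$ one gets $(f_\lambda,\phi_\lambda)^\leftarrow\big(i_{\text{\tiny{$Y_\lambda M_\lambda$}}}(v)\big)\ls {}^{\lambda}\hat{i}_{\text{\tiny{$XL$}}}\big((f_\lambda,\phi_\lambda)^\leftarrow(v)\big)$. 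The difficulty is that the inequality demanded by (A) carries $\tilde{i}_{\text{\tiny{$XL$}}}$, not ${}^{\lambda}\hat{i}_{\text{\tiny{$XL$}}}$, on its right-hand side, and passing to the infimum can only decrease that side; one must therefore control the off-diagonal factors ${}^{\mu}\hat{i}_{\text{\tiny{$XL$}}}$ with $\mu\neq\lambda$ evaluated at $(f_\lambda,\phi_\lambda)^\leftarrow(v)$, and check that they remain above $(f_\lambda,\phi_\lambda)^\leftarrow\big(i_{\text{\tiny{$Y_\lambda M_\lambda$}}}(v)\big)$. I regard isolating and discharging this coherence condition between the chosen right adjoints $(f_\lambda,\phi_\lambda)_{\ast}$ and the meet as the technical heart of the argument, and the point where the interplay of the source maps must be examined most carefully.
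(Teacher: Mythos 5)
The obstacle you flagged under condition (A) is not a coherence condition awaiting a clever argument: it is a genuine failure, and it kills the proposition as stated. Everything you did carry out is sound --- your (B) computation is, modulo notation, the paper's \emph{entire} proof, your verification that the meet is an interior map is fine, and your uniqueness argument via identities is correct --- but (A) is false for the meet. Take $\mathcal{D}$ containing the two-element chain $L=\{\bot,\top\}$ with $\otimes=\wedge$, let $X=\{a,b\}$, $Y_1=Y_2=\{p,q\}$, $M_1=M_2=L$, and let $(f_1,\phi_1)=(f_2,\phi_2)=(f,id_L)$ where $f(a)=p$, $f(b)=q$. Give $Y_1$ the discrete interior map $i_{Y_1M_1}(v)=v$ and give $Y_2$ the trivial one: $i_{Y_2M_2}(v)=0_{Y_2}$ for $v\neq 1_{Y_2}$ and $i_{Y_2M_2}(1_{Y_2})=1_{Y_2}$. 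Since $f$ is bijective, $(f,id_L)^{\leftarrow}(b)=b\circ f$ is an order isomorphism and $(f,id_L)_{\ast}(u)=u\circ f^{-1}$; hence ${}^{1}\hat{i}_{XL}(u)=u$ while ${}^{2}\hat{i}_{XL}(u)=0_X$ for every $u\neq 1_X$, so that
\[
\tilde{i}_{XL}=\bigwedge_{\lambda}{}^{\lambda}\hat{i}_{XL}={}^{2}\hat{i}_{XL}.
\]
Now take $v\in M_1^{Y_1}$ with $v(p)=\top$, $v(q)=\bot$. The $I$-continuity of $(f_1,\phi_1)$ with respect to $\tilde{i}_{XL}$ would require $(f,id_L)^{\leftarrow}\big(i_{Y_1M_1}(v)\big)\ls \tilde{i}_{XL}\big((f,id_L)^{\leftarrow}(v)\big)$, i.e.\ $v\circ f\ls 0_X$, which is false since $(v\circ f)(a)=\top$. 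So the meet is not even a \emph{lift} of the structured source (its structure maps are not VBIO-SET morphisms), let alone an initial one; equivalently, the ``only if'' half of the initiality equivalence fails already for $(g,\psi)=id_{(X,L)}$. You should also know that the paper's own proof does not resolve this: it proves only your condition (B) and never addresses (A), so the error is in the paper, and your analysis pinpointed exactly where it lies.

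The statement (and with it the subsequent theorem that VBIO-SET is topological) is repaired by taking the \emph{join} instead of the meet, $\tilde{i}_{XL}=\bigvee_{\lambda}{}^{\lambda}\hat{i}_{XL}$, and then your proof scheme goes through with the problematic step becoming trivial. Indeed, (A) follows at once from your single-index adjunction argument: $I$-continuity of $(f_\lambda,\phi_\lambda)$ with respect to a structure $j$ reads $(f_\lambda,\phi_\lambda)^{\leftarrow}\big(i_{Y_\lambda M_\lambda}(v)\big)\ls j\big((f_\lambda,\phi_\lambda)^{\leftarrow}(v)\big)$, so it is inherited by any structure above ${}^{\lambda}\hat{i}_{XL}$, in particular by the join. (B) is your same term-by-term estimate --- continuity of the composite at the argument $(f_\lambda,\phi_\lambda)_{\ast}(u)$, then the counit $(f_\lambda,\phi_\lambda)^{\leftarrow}\big((f_\lambda,\phi_\lambda)_{\ast}(u)\big)\ls u$ and monotonicity of $i_{ZN}$ --- followed by a supremum over $\lambda$; pulling $(g,\psi)^{\leftarrow}$ through $\bigvee$ now uses join preservation, which is on firmer ground than the meet preservation that your version and the paper's version require, because $(g,\psi)^{\leftarrow}=\psi^{op}\circ(-)\circ g$ preserves arbitrary joins directly from the definition of the lattice morphisms, whereas meet preservation rests only on the asserted adjunction $(g,\psi)^{\rightarrow}\dashv(g,\psi)^{\leftarrow}$. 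The join of interior maps is again an interior map by Corollary \ref{cl}, and uniqueness follows by your identity-morphism argument. In short: the difficulty you isolated is real and cannot be discharged; the correct resolution is to change $\bigwedge$ to $\bigvee$ in the statement.
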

  
  \begin{proof}
We must show that for every object $(Z,N, i_{\text{\tiny{$ZN$}}})$  of VBIO-SET,  each morphism $(g,\psi):(Z,N)\rightarrow (X,L)$ is I-continuous iff each $(f_\lambda, \phi_\lambda)\circ (g,\psi)$ is I-continuous, for all $\lambda\in \Lambda$ and for all $u \in L^{X}$ \\
In fact,
 \begin{align*}
 (g, \psi)^\leftarrow \big(\tilde{i}_{\text{\tiny{$XL$}}} (u)\big)&= (g, \psi)^\leftarrow \big(\bigwedge_{\lambda\in\Lambda}\,  ^{\text{\tiny{$\lambda$}}}\hat{i}_{\text{\tiny{XL}}} (u)\big)\\
 &= \bigwedge_{\lambda\in\Lambda} (g,\psi)^\leftarrow \big( ^{\text{\tiny{$\lambda$}}}\hat{i}_{\text{\tiny{$XL$}}} (u)\big)\\
 &= \bigwedge_{\lambda\in\Lambda} (g, \psi)^\leftarrow \big( (f_{\lambda},\phi_{\lambda})^\leftarrow \circ i_{\text{\tiny{$Y_{\lambda}M_{\lambda}$}}}\circ (f_{\lambda},\phi_{\lambda})_{\ast}(u)\big)\\
 &= \bigwedge_{\lambda\in\Lambda} \left[(g,\psi)^\leftarrow\circ(f_{\lambda},\phi_{\lambda})^\leftarrow \right]\circ i_{\text{\tiny{$Y_{\lambda}M_{\lambda}$}}}\circ (f_{\lambda},\phi_{\lambda})_{\ast}(u),
\end{align*}
 but as the composition  $(f_{\lambda},\phi_{\lambda})\circ (g, \psi)\, \text{for all} \, \lambda\in\Lambda $ is a continous then
  
  \begin{align*}
 (g, \psi)^\leftarrow \big(\tilde{i}_{\text{\tiny{$XL$}}} (u)\big)&
 \leq \bigwedge_{\lambda\in\Lambda}i_{\text{\tiny{$ZN$}}}\left[(g, \psi)^\leftarrow\circ(f_{\lambda},\phi_{\lambda})^\leftarrow \right] \circ (f_{\lambda},\phi_{\lambda})_{\ast}(u)\\
&= \bigwedge_{\lambda\in\Lambda}i_{\text{\tiny{$ZN$}}}(g, \psi)^\leftarrow\circ\left[(f_{\lambda},\phi_{\lambda})^\leftarrow  \circ (f_{\lambda},\phi_{\lambda})_{\ast}\right](u)\\
&\leq\bigwedge_{\lambda\in\Lambda} i_{\text{\tiny{$ZN$}}}(g, \psi)(u)\\
&= i_{\text{\tiny{$ZN$}}}(g,\psi)^\leftarrow(u)
\end{align*}
then 
$$(g, \psi)^\leftarrow \big(\tilde{i}_{\text{\tiny{$XL$}}} (u)\big)\leq i_{\text{\tiny{$ZN$}}}(g,\psi)^\leftarrow (u) .$$

\end{proof}

  As a consequence of corollary (\ref{cl}), proposition (\ref{imap}) and proposition (\ref{ss}), we obtain
          
   \begin{theorem} The concrete category $(\text{VBIO-SET}, U)$ over $SET\times\mathcal{D}$ is a topological category.
          \end{theorem}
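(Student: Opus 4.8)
The plan is to verify the standard criterion for topologicity directly: a concrete category is topological precisely when its forgetful functor is faithful and every structured source admits a unique initial lift. Faithfulness of $U\colon\text{VBIO-SET}\to SET\times\mathcal{D}$ is immediate, since a VBIO-SET morphism is nothing but a pair $(f,\phi)$ of underlying $SET\times\mathcal{D}$ data constrained by the inequality \eqref{ic1}, and is therefore determined by its image under $U$. The entire content thus reduces to the unique-initial-lift property, whose ingredients are supplied by the three results quoted above.

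First I would treat a source indexed by a genuine set $\Lambda$, namely $\big[(X,L),\big((f_\lambda,\phi_\lambda),(Y_\lambda,M_\lambda,i_{Y_\lambda M_\lambda})\big)\big]_{\lambda\in\Lambda}$. Proposition \ref{imap} guarantees that each single-map initial structure ${}^{\lambda}\hat{i}_{XL}=(f_\lambda,\phi_\lambda)^\leftarrow\circ i_{Y_\lambda M_\lambda}\circ(f_\lambda,\phi_\lambda)_\ast$ is a genuine interior map, while Corollary \ref{cl}, asserting that the fibre $I(X)$ is a complete lattice, ensures that their combination $\tilde{i}_{XL}=\bigwedge_{\lambda}{}^{\lambda}\hat{i}_{XL}$ is again an interior map lying in the same fibre. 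Proposition \ref{ss} then furnishes exactly the universal property of an initial lift: a morphism $(g,\psi)$ into $(X,L,\tilde{i}_{XL})$ is $I$-continuous if and only if every composite $(f_\lambda,\phi_\lambda)\circ(g,\psi)$ is $I$-continuous. Uniqueness is then automatic: two initial lifts of one source are carried into one another by the identity morphism in both directions, so they coincide as elements of the ordered fibre $I(X)$.

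Next I would dispose of the two boundary cases not literally covered by the set-indexed argument. For the empty source over $(X,L)$ the initial lift must be the terminal object of the fibre, i.e. the structure into which every morphism is $I$-continuous; by Corollary \ref{cl} the complete lattice $I(X)$ has a least element, namely the trivial interior operator $I_{T}$, and being least it renders every morphism into $(X,L,I_{T})$ automatically $I$-continuous, which is precisely the initiality required of the empty source. For a source indexed by a proper class I would appeal to fibre-smallness, again extracted from Corollary \ref{cl}: since $I(X)$ is a set, the class $\{{}^{\lambda}\hat{i}_{XL}\}$ realizes only set-many distinct interior maps, so the class-indexed combination collapses to a set-indexed one and the problem reduces to the case already settled by Proposition \ref{ss}.

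The step I expect to be the genuine obstacle is this last reduction of arbitrarily large sources to small ones, together with the boundary analysis: the definition of a topological category demands initial lifts for sources indexed by any class, whereas Proposition \ref{ss} is stated only for an index set $\Lambda$, and the empty-index case must be governed by the least element of the fibre rather than by a naive reading of the combination formula. Everything hinges on converting the complete-lattice conclusion of Corollary \ref{cl} into honest fibre-smallness and on pinning down these extremal structures. Once these points are secured, the three results --- Corollary \ref{cl}, Proposition \ref{imap}, and Proposition \ref{ss} --- assemble into a unique initial lift for every structured source, establishing that $(\text{VBIO-SET},U)$ is a topological category.
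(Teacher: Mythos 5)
Your proposal is correct and follows essentially the same route as the paper, whose entire proof consists of citing the same three ingredients (the completeness of the fibres, the fact that each $\hat{i}_{XL}$ is an interior map, and the initiality of the meet $\bigwedge_{\lambda}{}^{\lambda}\hat{i}_{XL}$) and declaring the theorem a consequence. You are in fact more careful than the paper on exactly the points that need care: the empty source, where the naive empty meet would yield the discrete operator rather than the required indiscrete (least) structure, and class-indexed sources, which reduce to set-indexed ones by fibre-smallness --- neither of which the paper addresses (and note that the paper's stated ``trivial'' operator $i_{T_X}(u)=1_X$ for $u\neq 0$ violates the contraction axiom, so the least element you need is the corrected one with $i(u)=0_X$ for $u\neq 1_X$).
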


\section{Some additional properties of interior operators}
 \begin{defi}
The interior operator $I=(i_{\text{\tiny{$XL$}}})_{\text{\tiny{$(X,L)\in |SET\times \mathcal{D}|$}}}$ of definition (\ref{int}) is called idempotent if the condition
\[
i_{\text{\tiny{$XL$}}}\big(i_{\text{\tiny{$XL$}}}(u)\big)= i_{\text{\tiny{$XL$}}}(u)\quad \text{for all}\quad u\in L^X
\]
holds for every pair $(X,L)\in |SET\times \mathcal{D}|$.
\end{defi}

\begin{prop}
Let $I=(i_{\text{\tiny{$XL$}}})_{\text{\tiny{$(X,L)\in |SET\times \mathcal{D}|$}}}$ be an  idempotent interior operator. Then 
the initial interior operator $\hat{I}=(\hat{i}_{\text{\tiny{$XL$}}})_{\text{\tiny{$(X,L)\in |SET\times \mathcal{D}|$}}}$ defined by

\[
 \hat{i}_{\text{\tiny{$XL$}}}=(f,\phi)^\leftarrow \circ i_{\text{\tiny{$YM$}}}\circ (f,\phi)_{\ast},
\]
 for each morphism $(f,\phi):(X,L)\longrightarrow(Y,M)$ in $SET\times\mathcal{D}$ is also idempotent.
\end{prop}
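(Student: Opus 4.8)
The plan is to establish the required identity $\hat{i}_{XL}\big(\hat{i}_{XL}(u)\big)=\hat{i}_{XL}(u)$ by proving the two opposite inequalities separately. To keep the computation readable I would abbreviate $r:=(f,\phi)^\leftarrow$, $s:=(f,\phi)_{\ast}$ and $i:=i_{YM}$, so that $\hat{i}_{XL}=r\circ i\circ s$. One inequality costs nothing: by Proposition \ref{imap} the map $\hat{i}_{XL}$ is itself an interior map, hence contractive, so applying the contraction axiom $(I_1)$ to the element $\hat{i}_{XL}(u)$ gives $\hat{i}_{XL}\big(\hat{i}_{XL}(u)\big)\leq\hat{i}_{XL}(u)$ immediately. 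All the real work therefore lies in the reverse inequality $\hat{i}_{XL}\big(\hat{i}_{XL}(u)\big)\geq\hat{i}_{XL}(u)$.

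For the reverse inequality I would set $v:=i(s(u))\in M^Y$, so that $\hat{i}_{XL}(u)=r(v)$ and $\hat{i}_{XL}\big(\hat{i}_{XL}(u)\big)=r\big(i(s(r(v)))\big)$. The chain of reasoning is then: first, the unit of the adjunction $r\dashv s$, namely $v\leq s(r(v))$ for all $v\in M^Y$ (this is the companion inequality to the counit $r\circ s\leq\mathrm{id}$ already exploited in Proposition \ref{imap}), yields $s(r(v))\geq v$; second, monotonicity of $i$ gives $i(s(r(v)))\geq i(v)$; third --- and this is the only place idempotency of $I$ is used --- since $v=i(s(u))$ we have $i(v)=i\big(i(s(u))\big)=i(s(u))=v$, whence $i(s(r(v)))\geq v$; finally, monotonicity of $r$ gives $r\big(i(s(r(v)))\big)\geq r(v)$, that is $\hat{i}_{XL}\big(\hat{i}_{XL}(u)\big)\geq\hat{i}_{XL}(u)$. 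Combining the two inequalities delivers the equality.

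The only genuinely delicate point I expect is recognizing why idempotency of $i_{YM}$ is exactly what rescues the argument. The composite $r\circ s=(f,\phi)^\leftarrow\circ(f,\phi)_{\ast}$ need not be the identity --- only $r\circ s\leq\mathrm{id}$ holds --- so $\hat{i}_{XL}\circ\hat{i}_{XL}$ does not collapse for purely formal reasons, and a naive attempt to cancel would fail. The resolution is to invoke the adjunction in the \emph{other} direction, through the unit $s\circ r\geq\mathrm{id}$ on $M^Y$, and then to observe that the possibly larger argument $s(r(v))$ is harmless precisely because $v$ is already $i$-open ($i(v)=v$): monotonicity followed by idempotency pins $i(s(r(v)))$ back down to at least $v$. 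The main bookkeeping hazard is simply keeping straight which inequalities live in $L^X$ and which in $M^Y$, and invoking the unit rather than the counit; once that is fixed, the verification is routine.
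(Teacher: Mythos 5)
Your proof is correct and follows essentially the same route as the paper: the easy inequality $\hat{i}_{XL}\circ\hat{i}_{XL}\leq\hat{i}_{XL}$ from contraction, and the reverse inequality by inserting the unit $\mathrm{id}\leq(f,\phi)_{\ast}\circ(f,\phi)^{\leftarrow}$ into the middle of the composite and then using monotonicity and idempotency of $i_{YM}$ (the paper writes this as a single chain of operator inequalities rather than pointwise, but the content is identical). Your explicit identification of the unit versus the counit is in fact cleaner than the paper's write-up, whose final line states the direction of the contraction inequality with a typo.
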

\begin{proof}

Suppose that $I=(i_{\text{\tiny{$XL$}}})_{\text{\tiny{$(X,L)\in |SET\times \mathcal{D}|$}}}$ is an  idempotent interior operator and let $(f,\phi):(X,L)\longrightarrow(Y,M)$ be a morphism. Then
\begin{align*}
 \hat{i}_{\text{\tiny{$XL$}}}\circ \hat{i}_{\text{\tiny{$XL$}}} &=\Big((f,\phi)^\leftarrow \circ i_{\text{\tiny{$YM$}}}\circ (f,\phi)_{\ast}\Big)\circ \Big((f,\phi)^\leftarrow \circ i_{\text{\tiny{$YM$}}}\circ (f,\phi)_{\ast}\Big)\\
& \geq (f,\phi)^\leftarrow \circ \big(i_{\text{\tiny{$YM$}}}\circ i_{\text{\tiny{$YM$}}}\big)\circ (f,\phi)_{\ast}\\
&= (f,\phi)^\leftarrow \circ  i_{\text{\tiny{$YM$}}}\circ (f,\phi)_{\ast}\\
&= \hat{i}_{\text{\tiny{$XL$}}}.
 \end{align*}
 On the hand, the monotonicity  condition of interior operators implies that 
 $$\hat{i}_{\text{\tiny{$XL$}}}\leq \hat{i}_{\text{\tiny{$XL$}}}\circ \hat{i}_{\text{\tiny{$XL$}}}.$$
 \end{proof}
 
 \begin{defi}
 The interior operator $I=(i_{\text{\tiny{$XL$}}})_{\text{\tiny{$(X,L)$}} \in \left| SET\times\mathcal{D} \right|}$ of definition (\ref{def-int}) is called 
 \begin{enumerate}
 \item productive if the condition
 $$ i_{\text{\tiny{$XL$}}} (u \land v) = i_{\text{\tiny{$XL$}}} (u) \land i_{\text{\tiny{$XL$}}} (v)\quad \text{for all $u,v \in L^{X}$}$$
 holds for every set $X$.
 \item Fully productive if the condition 
 $$i_{\text{\tiny{$XL$}}} \big(\bigwedge_{\lambda\in\Lambda} u_\lambda\big)= \bigwedge_{\lambda\in\Lambda}i_{\text{\tiny{$XL$}}}(u_\lambda)\quad \text{for all} \quad \{u_\lambda \mid \lambda \in \Lambda\}\subseteq L^{X}\, \text{holds for every set $X$}. $$
 \end {enumerate}
  
 \end{defi}
 
 \begin{prop}
 Let $I=(i_{\text{\tiny{$XL$}}})_{\text{\tiny{$(X,L)$}} \in \left| SET\times\mathcal{D} \right|}$ be a fully productive interior operator. Then the initial interior operator $\hat{I}=(\hat{i}_{\text{\tiny{$XL$}}})_{\text{\tiny{$(X,L)$}} \in \left| SET\times\mathcal{D} \right|}$ defined by \\
 \centerline{
 $\hat{i}_{\text{\tiny{$XL$}}}=(f,\phi)^\leftarrow \circ i_{\text{\tiny{$YM$}}}\circ (f,\phi)_{\ast}$ for each morphism $(f,\phi):(X,L) \rightarrow (Y,M)$ 
 }\\ is also fully productive
 \end{prop}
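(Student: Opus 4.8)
The plan is to exhibit $\hat{i}_{XL}=(f,\phi)^{\leftarrow}\circ i_{YM}\circ(f,\phi)_{\ast}$ as a composite of three maps each of which commutes with arbitrary meets, and then simply transport the meet through the composite. The interior-operator axioms for $\hat{I}$ are already guaranteed by Proposition \ref{imap}, so the entire content here is meet-preservation.

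First I would collect the relevant adjunction data from Section 1.3. Viewing each fuzzy powerset $L^{X}$ as a poset-category, meets are the limits, so any right adjoint between such posets preserves them. By definition $(f,\phi)_{\ast}$ is the right adjoint of $(f,\phi)^{\leftarrow}$, hence $(f,\phi)_{\ast}$ preserves arbitrary meets; and because $(f,\phi)^{\rightarrow}$ is the left adjoint of $(f,\phi)^{\leftarrow}$, the map $(f,\phi)^{\leftarrow}$ is itself a right adjoint and so also preserves arbitrary meets. The middle factor $i_{YM}$ preserves arbitrary meets by the standing hypothesis that $I$ is fully productive.

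With these three facts in hand, for any family $\{u_{\lambda}\mid\lambda\in\Lambda\}\subseteq L^{X}$ I would compute
\begin{align*}
\hat{i}_{XL}\Big(\bigwedge_{\lambda\in\Lambda}u_{\lambda}\Big)
&=(f,\phi)^{\leftarrow}\Big(i_{YM}\big((f,\phi)_{\ast}(\textstyle\bigwedge_{\lambda}u_{\lambda})\big)\Big)\\
&=(f,\phi)^{\leftarrow}\Big(i_{YM}\big(\textstyle\bigwedge_{\lambda}(f,\phi)_{\ast}(u_{\lambda})\big)\Big)\\
&=(f,\phi)^{\leftarrow}\Big(\textstyle\bigwedge_{\lambda}i_{YM}\big((f,\phi)_{\ast}(u_{\lambda})\big)\Big)\\
&=\bigwedge_{\lambda\in\Lambda}(f,\phi)^{\leftarrow}\Big(i_{YM}\big((f,\phi)_{\ast}(u_{\lambda})\big)\Big)\\
&=\bigwedge_{\lambda\in\Lambda}\hat{i}_{XL}(u_{\lambda}),
\end{align*}
where the second equality uses meet-preservation of $(f,\phi)_{\ast}$, the third uses full productivity of $i_{YM}$, and the fourth uses meet-preservation of $(f,\phi)^{\leftarrow}$. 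This is precisely the defining identity of full productivity for $\hat{i}_{XL}$, for every set $X$.

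The one genuinely load-bearing step --- and the place I would be most careful --- is the claim that $(f,\phi)^{\leftarrow}$ preserves arbitrary meets. Meet-preservation of the right adjoint $(f,\phi)_{\ast}$ is immediate, but for $(f,\phi)^{\leftarrow}$ this is not transparent from its pointwise description $(f,\phi)^{\leftarrow}(b)=\phi^{op}\circ b\circ f$, since a direct argument would require $\phi^{op}$ to commute with meets in the base lattices. The clean and safe route is instead to invoke the adjunction $(f,\phi)^{\rightarrow}\dashv(f,\phi)^{\leftarrow}$ recorded at the end of Section 1.3, which makes $(f,\phi)^{\leftarrow}$ a right adjoint and hence automatically meet-preserving. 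Once this is pinned down, the remainder is the purely formal chaining displayed above.
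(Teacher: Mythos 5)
Your proposal is correct and follows essentially the same route as the paper: both decompose $\hat{i}_{XL}$ into the three factors $(f,\phi)^{\leftarrow}\circ i_{YM}\circ(f,\phi)_{\ast}$, justify meet-preservation of the outer two via the adjunctions recorded in Section 1.3 (the paper likewise notes that $(f,\phi)_{\ast}$ is a right adjoint and that $(f,\phi)^{\leftarrow}$, being also a right adjoint of $(f,\phi)^{\rightarrow}$, preserves meets), and then chain the identical sequence of equalities. Your added caution about why $(f,\phi)^{\leftarrow}$ preserves meets is a slightly more explicit rendering of the same argument, not a different one.
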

 \begin{proof}
 Supponse that $I=(i_{\text{\tiny{$XL$}}})_{\text{\tiny{$(X,L)$}} \in \left| SET\times\mathcal{D} \right|}$ be a fully productive interior operator and let $f:X \rightarrow Y$ be a funtion. Since $(f, \phi)_{\ast}$ is a right adjont, it preserves all existing meets, and since $(f, \phi)^\leftarrow$ is both left and right adjont, it preserves all existing joins and meets, so for all $\{u_\lambda \mid \lambda \in \Lambda\}\subseteq L^{X}$ 
 
 \begin{align*}
 \hat{i}_{\text{\tiny{$XL$}}}  \big(\bigwedge_{\lambda\in\Lambda} u_\lambda\big)&= (f, \phi)^\leftarrow \Big( i_{\text{\tiny{$YM$}}}\big((f, \phi)_{\ast}(\bigwedge_{\lambda\in\Lambda} u_\lambda)\big)\Big)\\
 &=(f, \phi)^\leftarrow \Big( i_{\text{\tiny{$YM$}}}\big(\bigwedge_{\lambda\in\Lambda} (f, \phi)_{\ast}(u_\lambda)\big)\Big)\\
 &=(f, \phi)^\leftarrow \Big(\bigwedge_{\lambda\in\Lambda} i_{\text{\tiny{$YM$}}}\big( (f, \phi)_{\ast}(u_\lambda)\big)\Big)\\
 &=\bigwedge_{\lambda\in\Lambda}(f, \phi)^\leftarrow \Big( i_{\text{\tiny{$YM$}}}\big( (f, \phi)_{\ast}(u_\lambda)\big)\Big)\\
 &=\bigwedge_{\lambda\in\Lambda}\hat{i}_{\text{\tiny{$XL$}}}
 \end{align*}
 \end{proof}
 \pagebreak
\subsection{Open  fuzzy sets}
          \begin{defi}
          An $L$-fuzzy subset $u$ of $X$ is called $I$-open in $(X,L)$ if it is equal to it interior, i.e, $i_{\text{\tiny{$XL$}}}(u)=u.$ The fuzzy $I$-continuity condition (\ref{ic1}) implies that $I$-openness is preserved by inverse images.
          \end{defi}
          \begin{prop} Let $(f,\phi):(X,L,c_{\text{\tiny{$XL$}}})\longrightarrow(Y,M,c_{\text{\tiny{$YM$}}})$ be a morphism in VBIO-SET. If $v \in M^{\text{\tiny{$Y$}}}$ is $I$-open then $(f,\phi)^\leftarrow(v)$ is $i$-open in $(X,L).$
          \end{prop}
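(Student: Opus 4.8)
The plan is to unwind the definition of $I$-openness and apply the fuzzy $I$-continuity hypothesis directly. By assumption $v\in M^{\text{\tiny{$Y$}}}$ is $I$-open, so $i_{\text{\tiny{$YM$}}}(v)=v$. Since $(f,\phi)$ is a morphism in VBIO-SET, it is fuzzy $I$-continuous, which by the continuity condition \eqref{ic1} gives
\[
(f,\phi)^\leftarrow\big(i_{\text{\tiny{$YM$}}}(v)\big)\ls i_{\text{\tiny{$XL$}}}\big((f,\phi)^\leftarrow(v)\big).
\]
Substituting $i_{\text{\tiny{$YM$}}}(v)=v$ on the left-hand side immediately yields $(f,\phi)^\leftarrow(v)\ls i_{\text{\tiny{$XL$}}}\big((f,\phi)^\leftarrow(v)\big)$.

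For the reverse inequality I would invoke the contraction axiom $(I_1)$ from Definition \ref{def-int}, applied to the element $u=(f,\phi)^\leftarrow(v)\in L^{\text{\tiny{$X$}}}$, which gives $i_{\text{\tiny{$XL$}}}\big((f,\phi)^\leftarrow(v)\big)\ls (f,\phi)^\leftarrow(v)$. Combining the two inequalities by antisymmetry of the lattice order $\ls$ produces the equality
\[
i_{\text{\tiny{$XL$}}}\big((f,\phi)^\leftarrow(v)\big)=(f,\phi)^\leftarrow(v),
\]
which is exactly the statement that $(f,\phi)^\leftarrow(v)$ is $I$-open in $(X,L)$.

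I do not anticipate any genuine obstacle here: the result is essentially the remark already made in the definition of open fuzzy sets, namely that $I$-continuity forces $I$-openness to be preserved under inverse images. The only points requiring a modicum of care are purely bookkeeping ones, that the element to which contraction is applied is $(f,\phi)^\leftarrow(v)$ (not $v$ itself), and that one must pair the continuity inequality with the contraction axiom in the correct direction so that antisymmetry closes the argument. The proof is therefore a two-line chain of inequalities followed by an appeal to antisymmetry.
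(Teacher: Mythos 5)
Your proof is correct and follows essentially the same route as the paper's: apply $I$-continuity to the $I$-open $v$ to get $(f,\phi)^\leftarrow(v)\ls i_{\text{\tiny{$XL$}}}\big((f,\phi)^\leftarrow(v)\big)$, then conclude equality. The only difference is that you make explicit the appeal to the contraction axiom $(I_1)$ and antisymmetry, which the paper leaves implicit in its final ``so''.
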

\begin{proof}
   If $v=i_{\text{\tiny{$YM$}}}(v),$ for $v\in M^{\text{\tiny{$Y$}}},$ then $(f,\phi)^\leftarrow(v)=(f,\phi)^\leftarrow\big(i_{\text{\tiny{$YM$}}}(v)\big)\leqslant i_{\text{\tiny{$XL$}}}\big((f,\phi)^\leftarrow(v)\big),$ so $i_{\text{\tiny{$XL$}}}\big((f,\phi)^\leftarrow(v)\big)=(f,\phi)^\leftarrow(v)$.
\end{proof}

    \subsection{$I$-open morphisms}
    \begin{defi}
    A morphism $(f,\phi):(X,L,i_{\text{\tiny{$XL$}}})\longrightarrow(Y,M,i_{\text{\tiny{$YM$}}})$ between  variable-basis fuzzy $I$-spaces is $I$-open if

    \begin{equation}\label{ic2}
i_{\text{\tiny{$XL$}}}\Big((f,\phi)^\leftarrow (v)\Big)\le  (f, \phi)^{\leftarrow}\big(i_{\text{\tiny{$YM$}}}(v)\big)\; \text{for all} \; v \in L^{\text{\tiny{$Y$}}} . 
    \end{equation}
    \end{defi} 
    \begin{prop}
     Let $(f,\phi):(X,L,i_{\text{\tiny{$XL$}}})\longrightarrow(Y,M,i_{\text{\tiny{$YM$}}})$ and\linebreak $(g,\psi):(Y,M,i_{\text{\tiny{$YM$}}})\longrightarrow(Z,N,i_{\text{\tiny{$ZN$}}})$ be two  $I$-open morphisms, then the morphism $(f,\phi)\circ (g,\psi)$ is $I$-open.
        \end{prop}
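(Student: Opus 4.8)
The plan is to mirror the composition result for $I$-continuous morphisms proved above, reversing every inequality, since the defining condition (\ref{ic2}) of an $I$-open morphism is exactly the reverse of the $I$-continuity condition (\ref{ic1}). Writing the composite as $(g,\psi)\circ(f,\phi):(X,L)\to(Z,N)$ — the order dictated by the domains and codomains of the two given morphisms — the argument rests on two ingredients: the contravariant functoriality of the backward operator, $\big((g,\psi)\circ(f,\phi)\big)^\leftarrow=(f,\phi)^\leftarrow\circ(g,\psi)^\leftarrow$, and the monotonicity of $(f,\phi)^\leftarrow$.

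First I would fix an arbitrary $w\in N^{\text{\tiny{$Z$}}}$ and instantiate the $I$-openness of $(f,\phi)$ at the fuzzy set $v=(g,\psi)^\leftarrow(w)\in M^{\text{\tiny{$Y$}}}$, obtaining $i_{\text{\tiny{$XL$}}}\Big((f,\phi)^\leftarrow\big((g,\psi)^\leftarrow(w)\big)\Big)\le(f,\phi)^\leftarrow\Big(i_{\text{\tiny{$YM$}}}\big((g,\psi)^\leftarrow(w)\big)\Big)$. Then I would take the $I$-openness of $(g,\psi)$, that is $i_{\text{\tiny{$YM$}}}\big((g,\psi)^\leftarrow(w)\big)\le(g,\psi)^\leftarrow\big(i_{\text{\tiny{$ZN$}}}(w)\big)$, and apply the monotone map $(f,\phi)^\leftarrow$ to both sides. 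Chaining the two resulting inequalities through the common middle term $(f,\phi)^\leftarrow\Big(i_{\text{\tiny{$YM$}}}\big((g,\psi)^\leftarrow(w)\big)\Big)$ yields $i_{\text{\tiny{$XL$}}}\Big((f,\phi)^\leftarrow\big((g,\psi)^\leftarrow(w)\big)\Big)\le(f,\phi)^\leftarrow\Big((g,\psi)^\leftarrow\big(i_{\text{\tiny{$ZN$}}}(w)\big)\Big)$, and rewriting both composites via the functoriality identity turns this into precisely condition (\ref{ic2}) for $(g,\psi)\circ(f,\phi)$.

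The step that deserves the most care — rather than being a genuine obstacle — is justifying the monotonicity of $(f,\phi)^\leftarrow$, since I apply it to the order relation coming from the openness of $(g,\psi)$. This is already at hand: as noted in the proof that initial operators preserve full productivity, $(f,\phi)^\leftarrow$ is simultaneously a left and a right adjoint and hence preserves arbitrary joins; any join-preserving map is order-preserving, because $u\le v$ forces $u\vee v=v$ and therefore $(f,\phi)^\leftarrow(v)=(f,\phi)^\leftarrow(u)\vee(f,\phi)^\leftarrow(v)\ge(f,\phi)^\leftarrow(u)$. The functoriality identity is the same contravariance tacitly used in the $I$-continuity composition proof, so no new work is required there and the argument closes.
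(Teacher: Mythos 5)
Your proposal is correct and follows essentially the same route as the paper's own proof: instantiate the $I$-openness of $(f,\phi)$ at $v=(g,\psi)^\leftarrow(w)$, apply the monotone map $(f,\phi)^\leftarrow$ to the $I$-openness inequality for $(g,\psi)$, chain the two through the common middle term, and invoke $\big((g,\psi)\circ(f,\phi)\big)^\leftarrow=(f,\phi)^\leftarrow\circ(g,\psi)^\leftarrow$. Your reading of the composite as $(g,\psi)\circ(f,\phi)$ matches what the paper's proof actually establishes, the order in the proposition's statement being a typo.
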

        
        \begin{proof} Since $(g,\psi):(Y,M,i_{\text{\tiny{$YM$}}})\longrightarrow(Z, N,i_{\text{\tiny{$YM$}}})$ is  $I$-open, we have $$i_{\text{\tiny{$YM$}}}\big((g,\psi)^\leftarrow(w)\big)\ls(g,\psi)^\leftarrow\big( i_{\text{\tiny{$ZN$}}}(w)\big)\quad \text{for all}\quad w \in N^{\text{\tiny{$Z$}}},$$
         it follows that 
         $$(f,\phi)^\leftarrow\Big( i_{\text{\tiny{$YM$}}}\big((g,\psi)^\leftarrow(w)\big)\Big)\ls (f,\phi)^\leftarrow \Big((g,\psi)^\leftarrow\big(i_{\text{\tiny{$ZN$}}}(u)\big)\Big)$$
        
  now, by the  $I$-openness of $(f,\phi),$
  $$i_{\text{\tiny{$XL$}}}\big((f,\phi)^\leftarrow(v)\big)\ls(f,\phi)^\leftarrow\big(i_{\text{\tiny{$YM$}}}(v)\big)\quad \text{for all}\quad v \in M^{\text{\tiny{$Y$}}},$$ in particular for $v=(g,\psi)^\leftarrow(u),$ 
  $$i_{\text{\tiny{$XL$}}}\big((f,\phi)^\leftarrow((g,\psi)^\leftarrow(w))\big)\ls(f,\phi)^\leftarrow\big(i_{\text{\tiny{$YM$}}}((g,\psi)^\leftarrow(w))\big)$$
  therefore
  $$i_{\text{\tiny{$XL$}}}\Big(\big((g,\psi)\circ (f,\phi)\big)^\leftarrow(w)\Big)\ls (\big((g,\psi)\circ (f,\phi)\big)^\leftarrow \big(i_{\text{\tiny{$ZN$}}}(w)\big)$$.
       \end{proof}
If we replace in the category VBIO-SET fuzzy $I$-continuous morphisms by  $I$-open morphisms, we obtain another topological category. 
The morphisms $(f,\phi):(X,L,i_{\text{\tiny{$XL$}}})\longrightarrow(Y,M,i_{\text{\tiny{$YM$}}})$ between  variable-basis fuzzy  $I$-spaces which are bijective,  fuzzy $I$-continuous and $I$-open, forms a group. We can say that a way of seeing  variable-basis fuzzy topology is studying invariants of the action of these groups aver the category $SET\times\mathcal{D}$.

  \section {Some examples of fuzzy interior operators}
\begin{ex}
Let  $L= [0, 1]$ be the unit interval considered as an ordered subset of the real numbers $\mathbb R$, as well as a  complete (not complemented) lattice. 
\begin{enumerate}
\item[(i)] For each  topological space $X$, we define $i_{\text{\tiny{$X$}}}: I^X\rightarrow L^X$ by
\[
i_{\text{\tiny{$X$}}}(u)=\bigvee\{ v\in L^X\mid v \,\ \text{is lower semi-continuous and}\,\ v\leqslant u\}.
\]
Clearly, the family $I=(i_{\text{\tiny{$X$}}})_{\text{\tiny{$X\in |TOP|$}}}$  is  a  fuzzy interior operator of the category $TOP$. Since the fixed points of the restriction of $i_{\text{\tiny{$X$}}}$ to $2^X$ produces the open sets of $X$, this operator is an extension of the usual interior in $TOP$.
\item[(ii)] For each  compact topological space $Y$, we define $j_{\text{\tiny{$Y$}}}: L^Y\rightarrow L^Y$ by
$j_{\text{\tiny{$Y$}}}(v)=m_v ,$ 
where $m_v$  is the constant function on $L^Y$ whose value is $ m_v=\min\{v(y)\mid y\in Y\}$.

Undoubtedly, the family $D=(d_{\text{\tiny{$Y$}}})_{\text{\tiny{$Y\in |COMP|$}}}$  is  a  fuzzy interior operator of the category $COMP$ (of compact topological spaces).
\item[(iii)] Every map $f:X\rightarrow Y$ from a topological space $X$ to a compact space $Y$ is fuzzy IJ-continuous since each constant map is lower semi-coninuous.
\item[(iv)] On the other hand, the only fuzzy JI-continuous maps between compact spaces an topological spaces are the constant.
\end{enumerate}
\end{ex}
\begin{ex}
Let $X = \{x\}$ be a single point set and $L = [0, 1]$ be the usual unit interval. The  maps $i_{\text{\tiny{$n$}}}:L^X\rightarrow L^X$ defined by $i_{\text{\tiny{$n$}}}(t)=t^{n}$, for $n=1,2,\cdots$ are interior maps, from which just $i_{\text{\tiny{$1$}}}$ and $i_{\text{\tiny{$\infty$}}}=\lim\limits_{n\to \infty}t^{n}$ are idempotent.
\end{ex}
\begin{ex}
For a $GL-$monoid $\L$ and for an $\L$-topology $\tau\subseteq \L^X$, we define
\[
i_{\text{\tiny{$X$}}}(u)=\bigvee\{v\in \tau\mid  v\ls u\}.
\]
These  maps produce an interior operator of the category L-TOP  whose associated closure operator is
\[
c_{\text{\tiny{$X$}}}(u)=\bigwedge_{v\in \tau}\{v\impl 0_X\mid  u\ls v\}.
\]
\end{ex}


 \end{document}